\newcommand{\imp}{\!\rightarrow\!}
\newcommand{\mpn}{\medskip\par\noindent}
\newcommand{\pa}{{\sf PA}}
\newcommand{\proves}{\vdash}
\newcommand{\gn}[1]{\ulcorner {#1} \urcorner }
\newcommand{\lc}[1]{#1\!\!:\!\!}
\newtheorem{Prop}{\bf Proposition}
\newtheorem{Theor}{\bf Theorem}
\newtheorem{Lemma}{\bf Lemma}
\newtheorem{Coro}{\bf Corollary}
\newtheorem{Fact}{\bf Fact.}
\newtheorem{Remark}{\bf Remark}
\newtheorem{Claim}[enumi]{Claim}
\newtheorem{defin}{\bf Definition}
\newtheorem{exam}{\bf Example}
\newtheorem{notat}{\bf Notation.}
\newenvironment{proof}{{\bf Proof.}}{\hfill $\slot$}
\newcommand{\slot}{\hfill \mbox{$\Box$}\vspace{\parskip}\\}
\newtheorem{Comment}{\bf Comment}
\begin{document}
\title{Consistency formula is strictly stronger in \pa\ than \pa-consistency}

\author{Sergei Artemov\\ \\
 {\small The Graduate Center, the City University of New York}\\
{\small  365 Fifth Avenue, New York City, NY 10016}\\
{\small {\tt sartemov@gc.cuny.edu}} }
\date{\today}
\maketitle

\begin{abstract}
In this note, we show that, despite the wide-spread assumption, the consistency formula for Peano Arithmetic \pa, ${\sf Con}_{\sf PA}$, {\it $\forall x$ ``$x$ is not a code of a derivation of $(0=1)$,"} is not equivalent in \pa\ to the consistency of \pa. Specifically, we demonstrate that ``\pa\ is consistent" is provably in \pa\ equivalent to the series ${\sf Con}^S_{\sf PA}$ of arithmetical sentences {\it ``$n$ is not a code of a derivation of $\ (0=1)$"} for $n=0,1,2,\ldots$.  Since ${\sf Con}_{\sf PA}$ is strictly stronger in \pa\ than ${\sf Con}^S_{\sf PA}$, the unprovability of ${\sf Con}_{\sf PA}$ in \pa\ does not yield the unprovability of \pa-consistency. 

In \cite{Art25},  the \pa-consistency in the form ${\sf Con}^S_{\sf PA}$ has been proven in \pa. 

\end{abstract}

\section{Arithmetic: contentual vs. formal}

We consider some fragment of contentual mathematics which, within this paper, we call {\sf Ar} (for ``Arithmetic") which contains the set of natural numbers $\omega$
$$0,1,2,\ldots,n,\ldots $$
and includes all primitive recursive operations on $\omega$.  


We also consider the Peano Arithmetic \pa, which is a formal first-order theory with the symbols and defining identities for all primitive recursive functions, one sort of variables to symbolically represent unspecified ``natural numbers." Along with logical axioms, \pa\ has the Induction Principle represented by an infinite series of instances of induction for each arithmetical formula. \pa\ has a unique standard model which is the structure with the domain $\omega$ in the language of \pa\ satisfying all postulates of \pa. 

By G\"odel's Incompleteness Theorem, there are models of \pa\ which are not isomorphic to the standard model. Simply speaking, there are (continuum many) countable nonstandard models of \pa\ each of which, in addition to $\omega$, has so-called nonstandard numbers.
\begin{quote}  The domain of formal quantifiers $\forall x$ and $\exists x$ in \pa\ is not specified and depends on a model. In nonstandard models, $\forall x$ and  $\exists x$ {\bf do not range over  $0,1,2,\ldots,n,\ldots$}.
\end{quote}

\section{Contentual property vs. its formalization}\label{cont}

Consider a contentual property $P$ of natural numbers expressible in {\sf Ar}. Suppose that $P$ can be formalized as a formula $P_f$ in the language of \pa\ by the following step-by-step formalization process: recursive operations on natural numbers in {\sf Ar} are directly represented by \pa-terms, propositional logic operations are straightforward, informal quantifiers ``for all natural numbers $n$" and ``exists a natural number $n$" are formalized as  $\forall x$ and $\exists x$. We refer to $P_f$ as \[ \mbox{\it the direct formalization of $P$.} \] 


\subsection{\pa\ consistency vs. the consistency formula ${\sf Con}_{\sf PA}$}\label{notequivalent}
 
The consistency of a formal theory means that all its formal derivations are free of contradictions. For the case of \pa, 
\begin{equation}\label{con}
 \mbox{\it for any derivation $D$, $D$ is not a proof of  $(0\!=\!1)$.}
\end{equation}
Derivations are finite syntactic objects, and their G\"odel codes are all standard natural numbers. 
Such numbers can be identified with numerals, terms $\overline{0},\overline{1},\overline{2},\ldots,\overline{n}\ldots$ in the language of {\sf PA}. 
We do not distinguish a syntactic object $X$ and its G\"odel number $\gn{X}$, a natural number $n$, and the corresponding numeral $\overline{n}$ when safe. 
Let also $\bot$ denote the arithmetic formula $(0\!=\! 1)$. 

Let $x\!:\!y$ denote the standard primitive recursive proof predicate in \pa, 
\begin{equation}\label{pp}
\mbox{\it ``x is code of a proof of a formula having code y."} 
\end{equation}
The proof predicate is represented by a quantifier-free \pa-formula which we also call $x\!:\!y$. 

Since the proof predicate is a direct copy of the definition of a formal proof, for each derivation $D$ and formula $\varphi$, the equivalence of ``$D$ is a proof of $\varphi$" and $\gn{D}\!:\!\gn{\varphi}$ is a straightforward combinatorial observation about G\"odel coding and it is uniformly provable in \pa. 

Following G\"odel, we reformulate \pa\ consistency (\ref{con}) in an equivalent form in the contentual arithmetic {\sf Ar}:
\begin{equation}\label{godelcon}
 \mbox{\it for any natural number $n$, $\neg\lc{n}\bot$.}
\end{equation}
This G\"odelized definition (\ref{godelcon}) of \pa-consistency is not yet in the language of \pa\ since it contains an informal quantifier ``{\it for any natural number $n$}" not expressible in \pa.  

The direct formalization of (\ref{godelcon}) is the consistency formula ${\sf Con}_{\sf PA}$, i.e., 
\begin{equation}\label{conform}
\forall x (\neg\lc{x}\bot).
\end{equation}

It would be a mistake to assume {\it a priori} that the consistency formula ${\sf Con}_{\sf PA}$ (\ref{conform}) adequately represents 
\pa-consistency property (\ref{godelcon}) in \pa\ because of different quantifications in (\ref{godelcon}) and (\ref{conform}).

The direct mathematical comparison of (\ref{godelcon}) and (\ref{conform})  is not possible since (\ref{godelcon}) is a well-defined mathematical property whereas (\ref{conform}) is not; the formal quantifier ``$\forall x$" in ${\sf Con}_{\sf PA}$ is not specified and its meaning depends on a choice of an arithmetical model. 
Comparing (\ref{godelcon}) and (\ref{conform}) formally within \pa\ is also not possible since (\ref{godelcon}) is not in the language of \pa. 
So, the choice of ${\sf Con}_{\sf PA}$ as representing {\it `` \pa\ is consistent"} in \pa\ has not had a mathematical justification.

As it follows from the results of the next Section~\ref{equivalent}, such a choice cannot be justified since, in \pa,  consistency formula ${\sf Con}_{\sf PA}$ is strictly stronger than \pa-consistency.

\subsection{\pa-consistency is equivalent in \pa\ to the consistency scheme}\label{equivalent}

As we have already mentioned, the G\"odelized definition (\ref{godelcon}) of \pa-consistency contains an informal quantifier ``{\it for any natural number $n$}" not expressible in \pa. However, we can emulate quantifiers over standard numbers by sets of formulas in \pa. 
\mpn
{\bf Consistency scheme} is the set ${\sf Con}^S_{\sf PA}$ of \pa-formulas

\begin{equation}\label{schemecon} \{ \neg\lc{0}\bot, \neg\lc{1}\bot,\neg\lc{2}\bot,\neg\lc{3}\bot\ldots  \}.
\end{equation}
Sets (sequences, schemes, etc.) of formulas are common objects in metamathematics of first-order theories. In particular, the induction principle is expressed in \pa\ by the Induction Scheme, which is a primitive recursive set of \pa-formulas.
A set $X$ of \pa--formulas holds under a given interpretation if each formula in $X$ holds.\footnote{The rigorous notion of a proof of a serial property in a theory was offered in \cite{Art25} (selector proofs) and studied in \cite{Art25,EG25}.} So, the rigorous reading of {\it ``${\sf Con}^S_{\sf PA}$ holds"} is 
\begin{equation}\label{holds}
\mbox{\it each formula in ${\sf Con}^S_{\sf PA}$ holds}. 
\end{equation}

All formulas in ${\sf Con}^S_{\sf PA}$ are closed quantifier-free sentences and hence ${\sf Con}^S_{\sf PA}$ is a well-defined mathematical property. This allows us to compare (\ref{godelcon}) and (\ref{schemecon}) mathematically.  Their comparison below amounts to a trivial observation, but we carry it on anyway to make sure it is formalizable in \pa. 

\begin{Prop}\label{one}
{\it \pa-consistency and ${\sf Con}^S_{\sf PA}$ are mathematically equivalent.}
\end{Prop}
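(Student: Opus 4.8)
The plan is to unfold both notions to their rigorous contentual readings and match them instance by instance. Recall first that \pa-consistency, originally (\ref{con}), was already reformulated by G\"odel coding as (\ref{godelcon}): since the codes of derivations are exactly the standard naturals, ``no derivation proves $\bot$'' becomes ``for every $n\in\omega$, $\neg\lc{n}\bot$.'' On the other side, by (\ref{holds}) the assertion that ${\sf Con}^S_{\sf PA}$ holds reads ``each formula in ${\sf Con}^S_{\sf PA}$ holds.'' Since the scheme (\ref{schemecon}) is precisely the family $\{\neg\lc{\overline{n}}\bot\mid n\in\omega\}$ indexed by the standard naturals, the latter is the same as ``for every $n\in\omega$, the closed quantifier-free \pa-sentence $\neg\lc{\overline{n}}\bot$ holds.'' Thus both sides are universal statements over the same index set $\omega$, and it suffices to compare them termwise.

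The key bridge is the per-instance equivalence: for each fixed standard $n$, the contentual fact $\neg\lc{n}\bot$ holds if and only if the sentence $\neg\lc{\overline{n}}\bot$ holds. I would note that $\lc{\overline{n}}\bot$ is a closed quantifier-free \pa-sentence, so ``it holds'' simply means it is true, and its truth is decidable and absolute, independent of any choice of model. Then I would invoke the observation already recorded in the excerpt: the proof predicate $x\!:\!y$ is a direct copy of the combinatorial definition of a formal proof, so the equivalence of ``$n$ is a proof of $\bot$'' and $\overline{n}\!:\!\bot$ is a straightforward combinatorial fact, uniformly provable in \pa. Negating, the contentual $\neg\lc{n}\bot$ coincides with the truth of $\neg\lc{\overline{n}}\bot$ for every $n$.

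Combining the two ingredients, I would quantify the per-instance equivalence over all $n\in\omega$: ``for every $n$, $\neg\lc{n}\bot$'' holds exactly when ``for every $n$, $\neg\lc{\overline{n}}\bot$ holds,'' which by the index correspondence is precisely ``each formula in ${\sf Con}^S_{\sf PA}$ holds.'' Chaining with the G\"odel-coding step yields the equivalence of \pa-consistency with the holding of ${\sf Con}^S_{\sf PA}$, as claimed.

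The mathematical content is admittedly trivial; the only point that deserves care --- and the reason to write it out rather than assert it --- is that the per-instance equivalence is \emph{uniform} in $n$. I would therefore check that the map $n\mapsto(\overline{n}\!:\!\bot)$ and the decision procedure for the quantifier-free proof predicate are primitive recursive, so that the comparison of the two universal statements is witnessed by a single \pa-provable schema rather than by an informal instance-by-instance inspection. This uniformity is what makes the whole argument, and not merely each of its instances, formalizable in \pa, and it is exactly what will allow the scheme ${\sf Con}^S_{\sf PA}$ --- rather than the single formula ${\sf Con}_{\sf PA}$ --- to serve as the faithful \pa-rendering of contentual consistency in the sequel.
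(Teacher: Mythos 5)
Your proposal is correct and follows essentially the same route as the paper: both take the G\"odelized form (\ref{godelcon}) of \pa-consistency and the reading (\ref{holds}) of ``${\sf Con}^S_{\sf PA}$ holds,'' and match the two universal statements over $\omega$ instance by instance. Your added care about the per-instance bridge and its uniformity is material the paper handles in the surrounding discussion (the identification of $n$ with $\overline{n}$ and the remark that the proof predicate is a direct, \pa-provably faithful copy of the definition of a formal proof) and in Proposition~\ref{two}, so it is a welcome elaboration rather than a different argument.
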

\begin{proof} 

$(\Rightarrow)$. Suppose {\it ``\pa\ is consistent,"} i.e., (\ref{godelcon}). Pick an arbitrary natural number $n$, then, by (\ref{godelcon}), $\neg\lc{n}\bot$ holds, hence 
${\sf Con}^S_{\sf PA}$ holds. 

$(\Leftarrow)$. Suppose ${\sf Con}^S_{\sf PA}$ holds, then each $\neg\lc{n}\bot$ holds, hence (\ref{godelcon}).

\end{proof}
The next step is to formalize this proof in \pa.
\begin{Prop}\label{two} 
{\it  The proof of Proposition~\ref{one} is formalizable in \pa.}
\end{Prop}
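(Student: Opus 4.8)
The plan is to replay the two-line argument of Proposition~\ref{one} inside \pa, treating both (\ref{godelcon}) and the scheme ${\sf Con}^S_{\sf PA}$ as serial properties in the sense of \cite{Art25} and checking that each step is carried out by a primitive recursive operation on codes that \pa\ verifies uniformly. First I would fix the primitive recursive enumeration $e(n)=\gn{\neg\lc{n}\bot}$ of ${\sf Con}^S_{\sf PA}$ and recall that \pa\ represents $e$ and proves its defining equations; thus ``the $n$-th member of ${\sf Con}^S_{\sf PA}$'' becomes an object \pa\ can name uniformly in $n$, and ``${\sf Con}^S_{\sf PA}$ holds'' is read, as in (\ref{holds}), as the serial property whose $n$-th instance is the closed quantifier-free sentence $\neg\lc{n}\bot$.

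The key observation is that, instance by instance over standard $n$, both (\ref{godelcon}) and ``${\sf Con}^S_{\sf PA}$ holds'' unfold to the very same sentence $\neg\lc{n}\bot$: the arbitrary-$n$ step of direction $(\Rightarrow)$ and the each-$n$ step of direction $(\Leftarrow)$ are the two faces of this single identity. Consequently each direction of the proof of Proposition~\ref{one} is witnessed, for fixed $n$, by the trivial derivation of $(\neg\lc{n}\bot)\leftrightarrow(\neg\lc{n}\bot)$. I would then package this uniformity as a selector: a primitive recursive $s$ with $s(n)$ the code of that trivial derivation, and verify
\[ \pa\proves\forall n\,\Prf{s(n),\gn{(\neg\lc{n}\bot)\leftrightarrow(\neg\lc{n}\bot)}}, \]
which is precisely the formalized equivalence in the serial reading. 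All coding facts invoked here --- that $e$ computes the scheme, that $s$ computes the matching proofs, and that these proofs are correct --- are primitive recursive and hence \pa-provable by the same combinatorial reasoning already noted for the proof predicate.

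The main obstacle is not computational but conceptual. One must keep the informal standard-number quantifier of (\ref{godelcon}) represented throughout by the scheme and resist silently replacing it with \pa's own $\forall x$ to express the content ``${\sf Con}^S_{\sf PA}$ holds''; doing so would turn that content into $\forall x\,\neg\lc{x}\bot$, i.e.\ ${\sf Con}_{\sf PA}$, and collapse the very distinction the paper is drawing. The $\forall n$ appearing in the selector statement above is of a different and harmless kind: it quantifies over proof-indices in an arithmetical assertion that \pa\ proves, not over the content of the scheme. Accordingly the formalization must stay at the level of the serial property --- the \pa-provable, uniform correspondence between $n$-th instances --- and never collapse the equivalence into a single closed \pa-formula. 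Once the selector-proof reading of ``\pa\ proves a scheme'' from \cite{Art25} is in place, the remaining work is the routine verification that the primitive recursive witnesses $e$ and $s$ behave as claimed, which presents no difficulty.
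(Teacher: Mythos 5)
Your proposal diverges from the paper's proof in a way that leaves a real gap. The paper's Proposition~\ref{two} is meant to produce a single \pa-derivation of an equivalence between two closed formulas: the direct formalization $\forall x(\neg\lc{x}\bot)$ of (\ref{godelcon}) (the paper is explicit that the informal quantifier is rendered by \pa's own $\forall x$ at this stage) and the direct formalization $\forall y(S(y)\imp\mbox{\it Tr}_1(y))$ of the meta-statement (\ref{holds}), ``each formula in ${\sf Con}^S_{\sf PA}$ holds.'' The two tools that make the right-hand side expressible at all --- a \pa-provably characteristic formula $S(y)$ for the set of codes of members of the scheme, and the $\Sigma_1$ truth predicate $\mbox{\it Tr}_1$ with Tarski's condition (\ref{tarski}) --- are entirely absent from your argument, and they are the actual content of the formalization: they are what lets \pa\ internalize the steps ``pick an arbitrary $n$'' and ``each $\neg\lc{n}\bot$ holds'' from the proof of Proposition~\ref{one}. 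The Comment following the proposition confirms this reading: Proposition~\ref{two} is said to establish $\pa\proves\forall x A(x)\leftrightarrow\forall x B(x)$ for appropriate $A$, $B$.

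What you prove instead is that \pa\ uniformly derives $(\neg\lc{n}\bot)\leftrightarrow(\neg\lc{n}\bot)$ for each $n$. That is true but vacuous: it records that the $n$-th instances of the two properties are the same sentence, and it formalizes none of the quantifier reasoning that constitutes the proof of Proposition~\ref{one}. Your stated reason for refusing to render the informal quantifier of (\ref{godelcon}) by $\forall x$ --- that this would ``collapse the very distinction the paper is drawing'' --- is a misreading: the paper itself takes $\forall x(\neg\lc{x}\bot)$, i.e.\ ${\sf Con}_{\sf PA}$, as the direct formalization of (\ref{godelcon}) inside the proof of Proposition~\ref{two}; the distinction it draws is between the scheme ${\sf Con}^S_{\sf PA}$ and the formula ${\sf Con}_{\sf PA}$, not between two readings of the quantifier within this formalization. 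To repair your argument you would need to say how ``${\sf Con}^S_{\sf PA}$ holds'' is expressed as a \pa-formula and then derive its equivalence with $\forall x(\neg\lc{x}\bot)$ inside \pa, which is exactly the $S(y)$ and $\mbox{\it Tr}_1$ construction you omit.
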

\begin{proof} This is a G\"odelian formalization under which syntactic objects, such as numerals, terms, formulas, are represented by their G\"odel numbers, operations and relations on these objects by primitive recursive terms and predicates. Furthermore, after G\"odel, we use the \pa-formula $\lc{x}y$ as a direct formal representation of the proof predicate (\ref{pp}). 

Given an arithmetic formula $\varphi(x)$, consider the natural arithmetical term  $\varphi(x)^\bullet$ for the primitive recursive function 
which for each $n$ returns $\gn{\varphi(n)}$.  In particular, $(\neg\lc{x}\bot)^\bullet$ is a primitive recursive enumeration of ${\sf Con}^S_{\sf PA}$ which for each $n$ return the G\"odel number of the $n$th formula in ${\sf Con}^S_{\sf PA}$. 

Consider a function which is, in a sense, the converse of $(\neg\lc{x}\bot)^\bullet$. Let $N(y)$ be a natural arithmetical term for a primitive recursive function 
\[ N(k)=\left\{ \begin{array}{ll}
n,  &  \mbox{\it if $k=\gn{\neg\lc{n}\bot}$}\\
0, & \mbox{\it otherwise}.
\end{array}
\right.
\]
By the construction of $N(y)$, 
\begin{equation}\label{N}  
\pa\proves\ \forall x\ ( N[(\neg\lc{x}\bot)^\bullet]=x).
\end{equation}
Consider a formula $S(y)$ which is $y=g(N(y))$ with $g(x)=(\neg\lc{x}\bot)^\bullet$. \pa\ can prove that $S(y)$ is a characteristic function for the set of G\"odel numbers of formulas from ${\sf Con}^S_{\sf PA}$, i.e.,
\begin{equation}\label{char}  
\pa\proves\ S(y) \ \leftrightarrow\ \exists x (y=(\neg\lc{x}\bot)^\bullet).
\end{equation}
Indeed, argue in \pa. Suppose $S(y)$, i.e., $y=g(N(y))$, and take $a=N(y)$. Then $y=(\neg\lc{a}\bot)^\bullet$ hence $\exists x (y=(\neg\lc{x}\bot)^\bullet)$. Now suppose for some $a$, $y=(\neg\lc{a}\bot)^\bullet$. By (\ref{N}), $N(y)=a$, hence $y=g(N(y))$, i.e., $S(y)$. 

Now we are back to proving Proposition~\ref{two}. 
To convert G\"odel numbers  $\gn{\neg\lc{n}\bot}$ back to formulas $\neg\lc{n}\bot$ in \pa, we use the truth predicate $\mbox{\it Tr}_1$ for $\Sigma_1$-formulas (cf. \cite{Art25,vO99}). This truth predicate satisfies Tarksi's condition for a $\Sigma_1$-formula $\varphi$:
\begin{equation}\label{tarski} 
\pa\proves \varphi(x)\leftrightarrow \mbox{\it Tr}_1(\varphi(x)^\bullet). 
\end{equation}
The direct formalization of \pa-consistency (\ref{godelcon}) is \[ \forall x(\neg\lc{x}\bot).\] The direct formalization of \emph{``${\sf Con}^S_{\sf PA}$ holds"} (\ref{holds}) is 
\[ \forall y(S(y)\imp\mbox{\it Tr}_1(y)). \]
Now we are formalizing the proof of Proposition~\ref{one} in \pa. 

$(\Rightarrow)$. Suppose $\forall x(\neg\lc{x}\bot)$. Pick $y$ and suppose $S(y)$. By (\ref{char}), there is $x$ such that $y=(\neg\lc{x}\bot)^\bullet$. Since 
$\neg\lc{x}\bot$ and (\ref{tarski}), $\mbox{\it Tr}_1((\neg\lc{x}\bot)^\bullet)$, hence $\mbox{\it Tr}_1(y)$. 

$(\Leftarrow)$. Suppose $\forall y(S(y)\imp\mbox{\it Tr}_1(y))$ and pick an arbitrary $x$. Take $y=(\neg\lc{x}\bot)^\bullet$. By (\ref{char}), $S(y)$. By assumptions, 
$\mbox{\it Tr}_1(y)$, hence $\mbox{\it Tr}_1((\neg\lc{x}\bot)^\bullet)$. By (\ref{tarski}), $\neg\lc{x}\bot$. 
\end{proof}
\mpn
{\bf Comment}. For the proof of Proposition~\ref{one} and its formalization in Proposition~\ref{two}, we have chosen natural and easy versions. However, the argument supports a more technical, but more constructive formulation.  In the current form, Proposition~\ref{two} proves that for appropriate $A$, $B$, 
\[ \pa\proves \forall x A(x) \leftrightarrow \forall x B(x) \] 
The proof of Proposition~\ref{two} actually shows something slightly stronger. It builds primitive recursive functions $f(x)$ and $h(x)$ such that 
\[ \pa\proves A(f(x))\imp B(x)\ \ \ \mbox{\it and}\ \ \   \pa\proves B(h(x))\imp A(x) .\]

\section{On Hilbert's consistency program}

Since ${\sf Con}^S_{\sf PA}$ is a mathematically justified way to represent the consistency of \pa\ in \pa, and ${\sf Con}_{\sf PA}$ is strictly stronger in \pa\ than ${\sf Con}^S_{\sf PA}$ (an easy observation, cf. \cite{Art25}), G\"odel's theorem stating the unprovability of ${\sf Con}_{\sf PA}$ does not answer the question of provability of \pa-consistency in \pa. The selector proof of the \pa-consistency in the form ${\sf Con}^S_{\sf PA}$ in \cite{Art25} provides a mathematically justified (affirmative) answer to the question of provability of \pa-consistency in \pa. 

For many decades, G\"odel's Second Incompleteness Theorem, yielding the unprovability of ${\sf Con}_{\sf PA}$ in \pa, has been regarded as mathematical evidence of the impossibility of Hilbert's consistency program (cf., for example, \cite{Kri22,Zach07}). The aforementioned analysis of differences between the consistency formula ${\sf Con}_{\sf PA}$ and \pa-consistency suggests that such cancelling of Hilbert's consistency program has been premature. 

For the recent progress in Hilbert's program related to treating consistency as a serial property, including the disproof of the unprovability of consistency paradigm,  cf. \cite{Art25,EG25}.

\end{document}